\newtheorem{propo}{Proposition}[section]
\newtheorem{theor}[propo]{Theorem}
\newtheorem{lemma}[propo]{Lemma}
\theoremstyle{definition}
\newtheorem{defin}[propo]{Definition}
\theoremstyle{remark}
\newtheorem{remar}[propo]{Remark}
\numberwithin{equation}{section}
\newcommand{\CC }{\mathbb{C}}
\newcommand{\FF }{\mathbb{F}}
\newcommand{\QQ }{\mathbb{Q}}
\newcommand{\Ac }{\mathcal{A}}
\newcommand{\Bc }{\mathcal{B}}
\DeclareMathOperator{\GL}{GL}
\DeclareMathOperator{\PGL}{PGL}
\DeclareMathOperator{\mm}{m}
\newcommand{\IF }{\mathcal{IF}}
\newcommand{\RF }{\mathcal{RF}}
\DeclareMathOperator{\Der}{Der}
\DeclareMathOperator{\pdeg}{pdeg}
\title[Free but not recursively free arrangements]
{Free but not recursively\\ free arrangements}
\author{M.~Cuntz}
\address{Michael Cuntz,
Fachbereich Mathematik,
Universit\"at Kai\-sers\-lau\-tern,
Postfach 3049,
D-67653 Kaiserslautern, Germany}
\email{cuntz@mathematik.uni-kl.de}
\author{T.~Hoge}
\address{Torsten Hoge,
Ruhr-Universit\"at Bochum,
Fakult\"at f\"ur Mathematik,
Universit\"atsstrasse 150,
D-44780 Bochum, Germany}
\email{torsten.hoge@rub.de}
\begin{document}

\begin{abstract}
We construct counterexamples to the conjecture that every free arrangement is recursively free in characteristic zero.
The intersection lattice of our smallest example has a realization over a finite field which is recursively free,
thus recursive freeness is not a combinatorial property of the intersection lattice of an arrangement.
\end{abstract}

\maketitle

\section{Introduction}

Motivated by his famous Addition-Deletion-Theorem (see \cite{p-hT-80} or Thm.\ \ref{adddel} below), Terao introduced the notion of \emph{inductive freeness} of an arrangement $\Ac$ (Def.\ \ref{def:indfree}), a property which implies the freeness of the module of derivations $D(\Ac)$. Inductive freeness is a purely combinatorial property of the intersection lattice of an arrangement, see for example Lemma \ref{indcomb}.
Terao's longstanding conjecture states that for a fixed field, freeness of the module of derivations of an arrangement of hyperplanes is a combinatorial property of its intersection lattice (see \cite{p-hT-80} or \cite{p-hT-83}).

Although inductive freeness is a powerful tool to verify the freeness of many interesting arrangements, there are probably even more arrangements, which are free but not inductively free (see \cite[Example 4.59]{OT} for one of the oldest examples).
But in fact, the Addition-Deletion-Theorem allows to prove the freeness of a much bigger class of arrangements introduced in \cite[Def.\ 3.6.4]{gZ-87}\footnote{The original definition by Ziegler is slightly different than \cite[Def.~4.60]{OT}, but it coincides with our definition in dimension three.}, the \emph{recursively free} arrangements (see also Def.\ \ref{def:recfree}).
An attempt to settle Terao's conjecture is to answer the question whether every free arrangement is recursively free (see \cite[3.6]{gZ-87}, \cite[4.3]{OT}, \cite[5]{p-S-99}).
However, this is not the case: In this note we present free but not recursively free arrangements in characteristic zero, and complete the picture:
\begin{center}
inductively free \quad$\subsetneq$\quad recursively free \quad$\subsetneq$\quad free.
\end{center}
We first found our counterexample $\Ac$ with $27$ hyperplanes in $\CC^3$ by using the enumeration techniques introduced in \cite{p-CG-13} and the realization algorithm from \cite{p-C10b}.

The intersection lattice of $\Ac$ has a further realization $\Bc$ over $\FF_{11}$. It turns out that $\Bc$ is free and recursively free although it has the same intersection lattice as $\Ac$. Hence recursive freeness is not a purely combinatorial property of the intersection lattice, and should thus be perceived in a different way than the notion of inductive freeness.
This observation is implicit in the examples in \cite[4]{p-gZ-90} for arrangements over finite fields: Ziegler defines a matroid which has a realization which is free or not free depending on the chosen field, but is of course never inductively free. However, all his arrangements are recursively free when they are free.

Last but not least, the intersection lattice of $\Ac$ has realizations over $\FF_q$ for $q$ large enough (for example $q=251$) which are free but not recursively free. Thus the inclusion ``recursively free $\subsetneq$ free'' also holds for (at least certain) positive characteristics.

\section{Preliminaries}

We shortly review the required notions, compare with \cite{OT}.

\begin{defin}
Let $\Ac$ be an arrangement of hyperplanes, i.e.\ a finite set of hyperplanes in a fixed vector space $V$ over a field $K$.
Let $S=S(V^*)$ the symmetric algebra of the dual space $V^*$ of $V$.
We choose a basis $x_1,\ldots,x_r$ for $V^*$ and identify $S$ with
$K[x_1,\ldots,x_r]$ via the natural isomorphism $S\cong K[x_1,\ldots,x_r]$.
We write $\Der(S)$ for the set of derivations of $S$ over $K$.
It is a free $S$-module with basis $D_1,\ldots,D_r$ where $D_i$ is the usual derivation
$\partial/\partial x_i$.

A nonzero element $\theta\in\Der(S)$ is \emph{homogeneous of polynomial degree} $p$
if $\theta=\sum_{k=1}^r f_k D_k$ and $f_k\in S_p$ for $a\le k\le r$.
In this case we write $\pdeg \theta = p$.

Let $\Ac$ be an arrangement in $V$ with defining polynomial
\[ Q(\Ac) = \prod_{H\in\Ac} \alpha_H \]
where $H=\ker \alpha_H$, $\alpha_H\in V^*$. Define the \emph{module of $\Ac$-derivations} by
\[ D(\Ac) = \{\theta\in\Der(S)\mid \theta(Q(\Ac))\in Q(\Ac)S\}. \]
An arrangement $\Ac$ is called a \emph{free arrangement} if $D(\Ac)$ is a free
module over $S$.
\end{defin}

If $\Ac$ is free and $\{\theta_1,\ldots,\theta_r\}$ is a homogeneous basis for
$D(\Ac)$, then $\pdeg\theta_1,\ldots,\pdeg\theta_r$ are called the \emph{exponents}
of $\Ac$ and we write
\[ \exp \Ac = \{\{\pdeg\theta_1,\ldots,\pdeg\theta_r\}\}, \]
where the notation $\{\{*\}\}$ is used to emphasize the fact that it is a multiset.
Remark that the exponents depend only on $\Ac$.

\begin{defin}[{\cite[1.12-1.14]{OT}}]
Let $(\Ac,V)$ be an arrangement. We denote $L(\Ac)$ the set of all nonempty
intersections of elements of $\Ac$ including the empty intersection $V$.

If $\Bc\subseteq\Ac$ is a subset, then $(\Bc,V)$ is called
a {\it subarrangement}. For $X\in L(\Ac)$ define a subarrangement $\Ac_X$ of $\Ac$ by
\[ \Ac_X = \{H\in\Ac\mid X\subseteq H\}. \]
Define an arrangement $(\Ac^X,X)$ in $X$ by
\[ \Ac^X=\{X\cap H\mid H\in\Ac\backslash\Ac_X \mbox{ and } X\cap H\ne \emptyset\}.\]
We call $\Ac^X$ the {\it restriction} of $\Ac$ to $X$.

Let $H_0\in\Ac$. Let $\Ac'=\Ac\backslash\{H_0\}$ and let $\Ac''=\Ac^{H_0}$.
We call $(\Ac,\Ac',\Ac'')$ a {\it triple} of arrangements and $H_0$ the
{\it distinguished} hyperplane.
\end{defin}

We will use the following important theorem:

\begin{theor}[Addition-Deletion, {\cite[Thm.~4.51]{OT}}]\label{adddel}
Suppose $\Ac\ne\emptyset$. Let $(\Ac,\Ac',\Ac'')$ be a triple.
Any two of the following statements imply the third:
\begin{eqnarray*}
\Ac \mbox{ is free with } \exp \Ac &=& \{\{b_1,\ldots,b_{r-1},b_r\}\}, \\
\Ac' \mbox{ is free with } \exp \Ac' &=& \{\{b_1,\ldots,b_{r-1},b_r-1\}\}, \\
\Ac'' \mbox{ is free with } \exp \Ac'' &=& \{\{b_1,\ldots,b_{r-1}\}\}. \\
\end{eqnarray*}
\end{theor}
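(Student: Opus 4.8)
The plan is to relate the three modules $D(\Ac)$, $D(\Ac')$, $D(\Ac'')$ through a single exact sequence and to keep track of polynomial degrees by means of Saito's criterion. Throughout I would use the standard local criterion that $\theta\in D(\Ac)$ if and only if $\theta(\alpha_H)\in\alpha_H S$ for every $H\in\Ac$ (each $\alpha_H$ being irreducible and the $\alpha_H$ pairwise non-associate), together with a choice of coordinates in which the distinguished hyperplane is $H_0=\ker x_r$. With this normalization $D(\Ac)=\{\theta\in D(\Ac')\mid\theta(x_r)\in x_rS\}$, so in particular $D(\Ac)\subseteq D(\Ac')$.

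First I would set up the fundamental exact sequence. Since the condition $\theta(x_r)\in x_rS$ says exactly that $\theta$ preserves the ideal $x_rS$, every $\theta\in D(\Ac)$ induces a derivation $\rho(\theta)$ of $S''=S/x_rS=K[x_1,\dots,x_{r-1}]$, and a short check shows $\rho(\theta)\in D(\Ac'')$. Using that $x_r$ is coprime to each $\alpha_H$ with $H\in\Ac'$, one identifies $\ker\rho$ with $x_rD(\Ac')$, which is isomorphic to $D(\Ac')$ shifted by one in degree. This yields the degree-preserving left-exact sequence of graded $S$-modules
\[ 0 \longrightarrow D(\Ac')(-1) \xrightarrow{\ x_r\ } D(\Ac) \xrightarrow{\ \rho\ } D(\Ac''), \]
where $D(\Ac'')$ is viewed as an $S$-module through $S\twoheadrightarrow S''$.

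Next I would prove each of the three implications by exhibiting an explicit homogeneous basis of the module whose freeness is to be shown and verifying Saito's criterion, namely that there are homogeneous $\theta_1,\dots,\theta_r\in D(\Ac)$ with $\det(\theta_i(x_j))$ a nonzero scalar multiple of $Q(\Ac)$, whose polynomial degrees are then the exponents. In the additive direction one lifts a basis of $D(\Ac'')$ through $\rho$ and adjoins $x_r$ times the generator of $D(\Ac')$ of degree $b_r-1$; the prescribed matching exponents force the degrees $b_1,\dots,b_r$, and the determinant condition is verified by noting that the last column $(\theta_i(x_r))_i$ is divisible by $x_r$, factoring out $x_r$ from $\det(\theta_i(x_j))$, and comparing the residual $(r-1)\times(r-1)$ minor with the Saito determinant of $D(\Ac'')$ via $Q(\Ac)=x_r\,Q(\Ac')$. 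The deletion and restriction directions are handled symmetrically, extracting a basis of $D(\Ac')$ or of $D(\Ac'')$ from bases of the other two.

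The hard part will be the exactness of the sequence on the right, i.e.\ the surjectivity of $\rho$ onto $D(\Ac'')$, since a priori the sequence is only left exact; this is also exactly the step where the hypothesis on exponents is indispensable, as the lifting of a basis of $D(\Ac'')$ in the additive case requires enough elements of the prescribed degrees to exist in $D(\Ac)$. I would control it by a Hilbert-series count: left-exactness gives $\operatorname{Hilb}(D(\Ac),t)\le t\,\operatorname{Hilb}(D(\Ac'),t)+\operatorname{Hilb}(D(\Ac''),t)$ degree by degree, with equality precisely when $\rho$ is surjective, and a direct computation shows that the three matching multisets $\{\{b_1,\dots,b_r\}\}$, $\{\{b_1,\dots,b_{r-1},b_r-1\}\}$, $\{\{b_1,\dots,b_{r-1}\}\}$ are calibrated so that the corresponding free Hilbert series satisfy this identity (equivalently $|\Ac|=|\Ac'|+1$ and $|\Ac''|=|\Ac|-b_r$). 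Thus, once two of the modules are free with the matching exponents, the Hilbert series of the third is forced, surjectivity of $\rho$ follows, and the resulting short exact sequence lets one assemble the desired basis and conclude freeness of the remaining module with exponents as claimed.
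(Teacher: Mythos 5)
The paper offers no proof of this statement: it is quoted verbatim from \cite[Thm.~4.51]{OT}, so your attempt can only be compared with the standard proof given there. Your skeleton is in fact exactly the classical one. The degree\--preserving left\--exact sequence $0 \to D(\Ac')(-1) \xrightarrow{\,x_r\,} D(\Ac) \xrightarrow{\,\rho\,} D(\Ac'')$ is \cite[Prop.~4.45]{OT}, your identification of $\ker\rho$ with $x_rD(\Ac')$ is correct, and finishing each implication by assembling $r$ homogeneous derivations and checking Saito's criterion through the factorization $Q(\Ac)=x_r\,Q(\Ac')$, with the degree count $\sum b_i = |\Ac|$, is also how Orlik--Terao conclude. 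Your calibration identity for the free Hilbert series, $t\bigl(t^{b_1}+\dots+t^{b_{r-1}}+t^{b_r-1}\bigr)+(1-t)\bigl(t^{b_1}+\dots+t^{b_{r-1}}\bigr)=t^{b_1}+\dots+t^{b_r}$ after clearing $(1-t)^r$, is likewise correct.

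The genuine gap is the step you yourself flag as the hard part: surjectivity of $\rho$, equivalently the existence of the lifts, is \emph{not} delivered by your Hilbert-series count, because that count is circular. The degreewise relation $\operatorname{Hilb}(D(\Ac),t)=t\operatorname{Hilb}(D(\Ac'),t)+\operatorname{Hilb}(\rho(D(\Ac)),t)\le t\operatorname{Hilb}(D(\Ac'),t)+\operatorname{Hilb}(D(\Ac''),t)$ involves all three series, and in each of the three implications exactly one of the three modules is not yet known to be free, so its Hilbert series is unknown (nothing computes $\operatorname{Hilb}(D(\Ac),t)$ for a possibly non-free $\Ac$ from the data at hand). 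In the addition direction you only get an \emph{upper} bound on $\operatorname{Hilb}(D(\Ac),t)$; forcing equality would require a matching lower bound, i.e.\ producing enough derivations of the prescribed degrees in $D(\Ac)$ --- which is precisely the content to be proved. Symmetrically, in the deletion direction there is no a priori upper bound on $\operatorname{Hilb}(D(\Ac'),t)$, and in the restriction direction none on $\operatorname{Hilb}(D(\Ac''),t)$; and even where the series of the third module could be pinned down, a graded module with the Hilbert series of a free module need not be free. The standard proof closes this hole by explicit construction rather than counting: it exploits the chain of inclusions $x_rD(\Ac')\subseteq D(\Ac)\subseteq D(\Ac')$ of modules whose Saito determinants differ by the single linear factor $x_r$ to adapt bases (so that, e.g., $r-1$ basis elements of $D(\Ac')$ already lie in $D(\Ac)$), and in the restriction direction it lets Saito's criterion \emph{certify} that the $r-1$ images generate all of $D(\Ac'')$ --- there surjectivity of $\rho$ is an output of the argument, never an input. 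To repair your write-up, replace the Hilbert-series step by such a basis-adaptation/lifting argument; the series identity you computed then serves only as consistency bookkeeping for the exponents.
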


Inspired by this theorem, one defines:

\begin{defin}[{\cite[Def.~4.53]{OT}}]\label{def:indfree}
The class $\IF$ of \emph{inductively free} arrangements is the smallest
class of arrangements which satisfies
\begin{enumerate}
\item The empty arrangement $\Phi_\ell$ of rank $\ell$ is in $\IF$ for $\ell\ge 0$,
\item\label{part2} if there exists $H\in \Ac$ such that $\Ac''\in\IF$, $\Ac'\in\IF$, and
$\exp \Ac''\subset\exp \Ac'$, then $\Ac\in\IF$.
\end{enumerate}
\end{defin}

\begin{lemma}\label{indcomb}
The property of an arrangement $\Ac$ of being inductively free is a 
combinatorial property of its intersection lattice $L(\Ac)$.
\end{lemma}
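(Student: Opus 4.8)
The plan is to prove, by strong induction on the number $|\Ac|$ of hyperplanes, the following sharpened statement: for any two arrangements $\Ac,\Bc$ equipped with an isomorphism $\varphi\colon L(\Ac)\to L(\Bc)$ of intersection lattices, one has $\Ac\in\IF$ if and only if $\Bc\in\IF$, and in that case $\exp\Ac=\exp\Bc$. The strengthening to include the exponents is essential, because the exponents enter the defining condition in Def.\ \ref{def:indfree}(\ref{part2}) through $\exp\Ac''\subset\exp\Ac'$. Since $|\Ac|$ equals the number of atoms of $L(\Ac)$ and $\varphi$ restricts to a bijection of atoms, the hyperplanes of $\Ac$ and of $\Bc$ are matched by $\varphi$; I write $H\mapsto\varphi(H)$. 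For the base case, the empty arrangement is characterized combinatorially by the condition that $L(\Ac)=\{V\}$ has a single element; it lies in $\IF$ by Def.\ \ref{def:indfree}(1) with empty exponent multiset, so the claim holds (the invisible ``rank $\ell$'' label is harmless, as every empty arrangement is inductively free).

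For the inductive step the key point is that both operations occurring in Def.\ \ref{def:indfree}(\ref{part2}) are \emph{combinatorial}, i.e.\ determined by $L(\Ac)$ together with the chosen atom $H_0$. For the deletion $\Ac'=\Ac\setminus\{H_0\}$, a flat $X\in L(\Ac)$ belongs to $L(\Ac')$ precisely when $X$ equals the intersection of those hyperplanes $H\in\Ac$, $H\ne H_0$, with $X\subseteq H$; since the set of hyperplanes containing $X$ is read off as the atoms of $L(\Ac)$ below $X$, this is a condition internal to $L(\Ac)$, and $L(\Ac')$ is the resulting induced subposet. For the restriction $\Ac''=\Ac^{H_0}$, the assignment $Y\mapsto Y$ identifies $L(\Ac'')$ with the interval $[H_0,\hat 1]$ of $L(\Ac)$, namely the flats contained in $H_0$, which is again intrinsic. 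Because $\varphi$ is a lattice isomorphism sending $H_0$ to $\varphi(H_0)$, it preserves both descriptions and hence restricts to isomorphisms $L(\Ac')\cong L(\Bc')$ and $L(\Ac'')\cong L(\Bc'')$, where $\Bc'=\Bc\setminus\{\varphi(H_0)\}$ and $\Bc''=\Bc^{\varphi(H_0)}$.

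Now suppose $\Ac\in\IF$ with witness $H_0$, so $\Ac',\Ac''\in\IF$ and $\exp\Ac''\subset\exp\Ac'$. As $|\Ac'|=|\Ac|-1$ and $|\Ac''|\le|\Ac|-1$, both are strictly smaller than $|\Ac|$, so the induction hypothesis applies: $\Bc'\in\IF$ with $\exp\Bc'=\exp\Ac'$ and $\Bc''\in\IF$ with $\exp\Bc''=\exp\Ac''$. Hence $\exp\Bc''\subset\exp\Bc'$, so $\varphi(H_0)$ witnesses $\Bc\in\IF$ by Def.\ \ref{def:indfree}(\ref{part2}). Moreover the exponents of $\Ac$ are forced by those of $\Ac'$ and $\Ac''$ through the Addition-Deletion Theorem \ref{adddel}: the unique exponent in $\exp\Ac'\setminus\exp\Ac''$ is raised by one and adjoined to $\exp\Ac''$ to yield $\exp\Ac$, and the same recipe produces $\exp\Bc$, so $\exp\Ac=\exp\Bc$. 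Interchanging the roles of $\Ac$ and $\Bc$ gives the reverse implication, and the induction is complete.

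The step demanding the most care is the combinatorial description of the deletion lattice $L(\Ac')$ inside $L(\Ac)$: in contrast to the restriction, which is simply an interval, a flat of $\Ac$ may fail to be a flat of $\Ac'$ exactly when it can only be cut out with the help of $H_0$, and one must verify that the intrinsic criterion above detects precisely these flats and nothing more. The secondary subtlety is the exponent bookkeeping; tracking the exponents recursively via Theorem \ref{adddel}, rather than invoking Terao's factorization of the characteristic polynomial, keeps the argument self-contained, but requires confirming that two of the three freeness statements in Theorem \ref{adddel} pin down the third exponent set uniquely, which is exactly its content.
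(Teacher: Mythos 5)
Your proposal is correct and takes essentially the same approach as the paper, whose entire proof is the two-line observation that for $H\in\Ac$ the lattices $L(\Ac\setminus\{H\})$ and $L(\Ac^H)$ can be recovered inside $L(\Ac)$ --- exactly the core of your inductive step. Your explicit exponent bookkeeping via Theorem~\ref{adddel} spells out what the paper leaves implicit (namely that $\exp\Ac$ is itself combinatorially determined along the induction, which is needed since the condition $\exp\Ac''\subset\exp\Ac'$ in Definition~\ref{def:indfree} refers to exponents), so yours is a rigorous elaboration rather than a different route.
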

\begin{proof} All information needed for part (\ref{part2}) in Def.\ \ref{def:indfree} is
included in the intersection lattice $L(\Ac)$: For $H \in \Ac$,
the intersection lattices $L(\Ac\backslash\{H\})$ and $L(\Ac^H)$ can be obtained as sublattices of $L(\Ac)$.
\end{proof}

A class of arrangements which is bigger than the class of inductively free ones is:

\begin{defin}[{\cite[Def.~4.60]{OT}}]\label{def:recfree}
The class $\RF$ of \emph{recursively free} arrangements is the smallest
class of arrangements which satisfies
\begin{enumerate}
\item The empty arrangement $\Phi_\ell$ of rank $\ell$ is in $\RF$ for $\ell\ge 0$,
\item if there exists $H\in \Ac$ such that $\Ac''\in\RF$, $\Ac'\in\RF$, and
$\exp \Ac''\subset\exp \Ac'$, then $\Ac\in\RF$,
\item if there exists $H\in \Ac$ such that $\Ac''\in\RF$, $\Ac\in\RF$, and
$\exp \Ac''\subset\exp \Ac$, then $\Ac'\in\RF$.
\end{enumerate}
\end{defin}

For $\alpha\in V^*$, we will write $\alpha^\perp$ for the kernel of $\alpha$.

\section{The counterexamples}

We will use the following simple lemma (see also \cite[Cor.\ 2.18]{p-HR12}):

\begin{lemma}\label{strnotindfree}
Let $\Ac$ be a free arrangement in $K^3$ with exponents $\{\{1,e,f\}\}$.
If $|\Ac^H| \notin\{e+1,f+1\}$ for all $H\in\Ac$, then $\Ac$ is not inductively free.
\end{lemma}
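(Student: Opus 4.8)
The plan is to argue by contradiction. Suppose $\Ac$ were inductively free. Since $\Ac$ has rank $3$ in $K^3$ it is essential (equivalently $e,f\ge 1$), so it is none of the empty arrangements $\Phi_\ell$; hence its membership in $\IF$ must be produced by part (\ref{part2}) of Def.\ \ref{def:indfree}. Thus there is a distinguished hyperplane $H\in\Ac$ with $\Ac'=\Ac\setminus\{H\}\in\IF$, $\Ac''=\Ac^H\in\IF$, and $\exp\Ac''\subset\exp\Ac'$. The whole proof then consists of showing that the mere existence of such an $H$ already forces $|\Ac^H|\in\{e+1,f+1\}$, which contradicts the hypothesis.

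The crucial structural input is that the restriction $\Ac''=\Ac^H$ lives in the two-dimensional space $H$. Because $\Ac$ is essential, $\Ac^H$ has rank $2$, and every arrangement of rank at most $2$ is free; concretely, an arrangement of $m$ lines through the origin in a plane has exponents $\{\{1,m-1\}\}$. So the first step I would record is the identity
\[ \exp\Ac'' = \{\{1,\,|\Ac^H|-1\}\}, \]
which is exactly what ties the two exponents of $\Ac''$ to the combinatorial quantity $|\Ac^H|$ appearing in the statement.

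Next I would feed the triple $(\Ac,\Ac',\Ac'')$ into the Addition-Deletion theorem (Thm.\ \ref{adddel}). Letting $c$ denote the unique element of the multiset $\exp\Ac'$ that does not lie in the sub-multiset $\exp\Ac''=\{\{1,|\Ac^H|-1\}\}$, the addition direction of the theorem (applied to the two free arrangements $\Ac'$ and $\Ac''$) forces $\Ac$ to be free with
\[ \exp\Ac = \{\{1,\,|\Ac^H|-1,\,c+1\}\}. \]
Since the exponents of $\Ac$ are intrinsic and equal to $\{\{1,e,f\}\}$, comparing yields the multiset identity $\{1,\,|\Ac^H|-1,\,c+1\}=\{1,e,f\}$. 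In particular the value $|\Ac^H|-1$ must occur among $\{1,e,f\}$; and if $|\Ac^H|-1\in\{e,f\}$ we conclude immediately that $|\Ac^H|\in\{e+1,f+1\}$.

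The one place I expect to do any genuine work is the boundary case $|\Ac^H|-1=1$, i.e.\ $|\Ac^H|=2$, where $e+1$ or $f+1$ cannot be read off directly from $|\Ac^H|-1$. Here I would compare multiplicities: for $|\Ac^H|=2$ the left multiset is $\{1,1,c+1\}$, which contains $1$ at least twice, so $\{1,e,f\}$ must also contain $1$ at least twice, forcing $1\in\{e,f\}$ and hence $2\in\{e+1,f+1\}$. Thus in every case $|\Ac^H|\in\{e+1,f+1\}$, contradicting the assumption that $|\Ac^H|\notin\{e+1,f+1\}$ for all $H\in\Ac$. Therefore $\Ac$ is not inductively free.
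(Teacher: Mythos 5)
Your proof is correct and takes essentially the same route as the paper's: extract the distinguished hyperplane $H$ from part (\ref{part2}) of Definition \ref{def:indfree}, use that the rank-two restriction satisfies $\exp\Ac^H=\{\{1,|\Ac^H|-1\}\}$, and compare with $\exp\Ac=\{\{1,e,f\}\}$ via Theorem \ref{adddel} to force $|\Ac^H|\in\{e+1,f+1\}$, a contradiction. The only difference is that you spell out the Addition-Deletion step and the multiset bookkeeping (including the boundary case $|\Ac^H|=2$) that the paper's terser proof leaves implicit.
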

\begin{proof}
Assume that $|\Ac^H| \notin\{e+1,f+1\}$ for all $H\in\Ac$. If $\Ac$ was inductively free, then there would exist a hyperplane $H\in\Ac$ such that $(\Ac,\Ac\backslash\{H\},\Ac^H)$ is a triple of arrangements with $\exp \Ac^H\subset\exp \Ac$. But then the exponents of $\Ac^H$ would be either $(1,e)$ or $(1,f)$, and hence $|\Ac^H| \in\{e+1,f+1\}$ which is a contradiction.
\end{proof}

\begin{defin}
Let $\zeta$ be a fifth root of unity in $\CC$ and $\omega=-\zeta^2-\zeta^3$ be the golden ratio.
The Coxeter group $W$ of type $H_3$ may be generated as a reflection group by the reflections (see for example \cite{b-Hum})
\[ g_1:=\begin{pmatrix} 1&0&0\\ 0&1&1\\ 0&0&-1 \end{pmatrix},\quad
g_2:=\begin{pmatrix} -1&0&0\\ \omega &1&0 \\ 0&0&1 \end{pmatrix},\quad
g_3:=\begin{pmatrix} 1&\omega &0 \\ 0&-1&0 \\ 0&1&1 \end{pmatrix} \]
acting on $V^*\cong K^3$, $K=\QQ(\zeta)$.
Now let $\Phi^+\in V^*$ be the corresponding set of positive roots of $W$ (the orbit of the standard basis under $W$) and
\[ R := \Phi^+ \:\:\dot\cup\:\: (1,-\zeta^2,0)\cdot W. \]
Then $\Ac:=\{v^\perp\mid v\in R\}$ is an arrangement with $27$ hyperplanes:
The set $\Phi^+$ has $15$ elements and the other orbit has $12$ hyperplanes.
\end{defin}

\begin{remar}
The image of $W$ under the canonical map $\GL_3(\CC)\rightarrow \PGL_3(\CC)$ is isomorphic to the alternating group $A_5$.
Indeed, $\Ac$ may also be obtained as the union of two orbits under the action of a three dimensional representation of $A_5$.
\end{remar}

\begin{remar}
The minimal field of definition over $\QQ$ for a realization of the intersection lattice of $\Ac$ is $\QQ(\zeta)$: There is no arrangement over $\QQ(\omega)$ with the same intersection lattice than $\Ac$.
\end{remar}

\begin{theor}\label{H3Anotrecfree}
The arrangement $\Ac$ is free but not recursively free.
\end{theor}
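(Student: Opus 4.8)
The plan is to treat the three assertions --- that $\Ac$ is free, not inductively free, and not recursively free --- in increasing order of difficulty, since the first two feed directly into the third.

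First I would establish freeness and pin down the exponents. Because $\Ac$ turns out not to be inductively free, it cannot be assembled from the empty arrangement by Addition--Deletion, so Theorem~\ref{adddel} is useless for certifying freeness here; instead I would exhibit an explicit homogeneous basis $\theta_1,\theta_2,\theta_3$ of $D(\Ac)$ and apply Saito's criterion (\cite{OT}), checking that $\det(\theta_i(x_j))$ is a nonzero scalar multiple of $Q(\Ac)$. The $A_5$-symmetry and the description of $\Ac$ as a union of two $W$-orbits should make it feasible to write down invariant-theoretic candidates for the $\theta_i$; I expect $\exp\Ac=\{\{1,13,13\}\}$, consistent with $|\Ac|=27$.

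Second, I would show $\Ac$ is not inductively free by applying Lemma~\ref{strnotindfree} with $e=f=13$: it suffices to verify that no $H\in\Ac$ has $|\Ac^H|=14$, i.e.\ that no line of $\Ac$ meets the other $26$ hyperplanes in exactly $14$ points. This is a finite check of the restrictions $\Ac^H$ in the intersection lattice. The same computation has a second payoff that powers the last part: by the exponent relation in Theorem~\ref{adddel}, a rank-three free arrangement admits a free single deletion $\Ac\setminus\{H\}$ (with compatible exponents) if and only if $|\Ac^H|\in\{e+1,f+1\}=\{14\}$; hence $\Ac$ admits \emph{no} free deletion whatsoever.

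The main work, and the hard part, is non-recursive-freeness. The key reduction is that in rank three, membership in $\RF$ (Def.~\ref{def:recfree}) is equivalent to lying in the connected component of the empty arrangement in the graph whose vertices are free arrangements and whose edges are single-hyperplane free additions and deletions; indeed rank-two restrictions are automatically free, so rules (2) and (3) are exactly these free moves. Since $\Ac$ has no free deletion, the only moves out of $\Ac$ are free \emph{additions} $\Ac\cup\{H\}$, and the exponent bookkeeping forces $|(\Ac\cup\{H\})^H|=14$, equivalently that $H$ passes through multiple points of $\Ac$ with $\sum_{p\in H}(m_p-1)=13$. This is a severe constraint: only finitely many lines $H$ satisfy it, each pinned down by a prescribed collinear set of multiple points, and for each candidate I would test freeness of $\Ac\cup\{H\}$ directly. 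Iterating this move gives the set of free arrangements reachable from $\Ac$; the claim to be proved is that this set is finite --- the upward additions cannot continue indefinitely --- and that it contains no inductively free arrangement, so that the component of $\Ac$ misses the empty arrangement and $\Ac\notin\RF$. Organising and bounding this enumeration, that is, proving the reachable component is finite and closed, is the genuine obstacle, and is precisely where the reduction under the $A_5$-symmetry and the computational enumeration tools become indispensable.
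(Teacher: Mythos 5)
Two genuine problems, one numerical and one structural. First, your exponents are wrong: the characteristic polynomial factors as $\chi_\Ac(t)=(t-1)(t-11)(t-15)$, so $\exp\Ac=\{\{1,11,15\}\}$, not $\{\{1,13,13\}\}$ (both sum to $27$, but only one is correct). This wrong guess propagates through all your bookkeeping: non-inductive freeness requires checking $|\Ac^H|\notin\{12,16\}$ for all $H$ (the actual values are $10$ and $11$, with multiset $\{\{10^{15},11^{12}\}\}$), and a free addition $\tilde\Ac=\Ac\,\dot\cup\,\{H\}$ with $\exp\tilde\Ac^H\subset\exp\tilde\Ac$ forces $\exp\tilde\Ac\in\{\{\{1,11,16\}\},\{\{1,12,15\}\}\}$, hence $|\tilde\Ac^H|\in\{12,16\}$ --- not $14$, and not via $\sum_p(m_p-1)=13$. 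Moreover your freeness step is only a hope: you never exhibit the Saito basis, and with the wrong target exponents the attempt would stall. The paper instead proves freeness by Yoshinaga's criterion for $3$-arrangements: the characteristic polynomial factors and the multirestriction $(\Ac^H,\mm^H)$ has exponents $\{\{d_1,d_2\}\}$ with $d_1d_2=165=11\cdot 15$.

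The structural gap is in your treatment of recursive freeness. You reduce to connectivity in a graph of free arrangements and then declare that ``proving the reachable component is finite and closed'' is the genuine obstacle --- and you leave exactly that unproved, so the proposal does not close. But no global component analysis is needed: since $\RF$ is the \emph{smallest} class closed under rules (2) and (3) of Definition \ref{def:recfree}, any derivation witnessing $\Ac\in\RF$ must end with one of those rules applied directly at $\Ac$, i.e.\ with a single compatible free deletion or a single compatible free addition. Deletions are killed by the restriction counts ($|\Ac^H|\in\{10,11\}$, never $12$ or $16$, the same computation as Lemma \ref{strnotindfree}). For additions, the exponent constraint forces $H$ to pass through at least two multiple points of $\Ac$: a hyperplane through no multiple point gives $|\tilde\Ac^H|=27$, and through exactly one gives $|\tilde\Ac^H|\ge 21$ since $|\Ac_p|\le 7$ (from $\{\{|\Ac_p|\}\}=\{\{2^{15},3^{70},7^6\}\}$). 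That leaves exactly $1186$ candidate hyperplanes, and a finite computation shows none of the resulting $\tilde\Ac$ is free, hence a fortiori none is in $\RF$. So the one-step analysis already yields $\Ac\notin\RF$; the component is just $\{\Ac\}$, and the unbounded iteration you worried about never starts. Your plan does correctly identify the two key mechanisms (no free deletion via restriction counts; candidate additions pinned to lines through multiple points, each tested for freeness), but without the correct exponents and without the decisive computational outcome, the argument as written would both enumerate the wrong candidate set and stop short of a proof.
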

\begin{proof}
Factorizing the characteristic polynomial gives
\[ \chi_\Ac(t)=(t-1)(t-11)(t-15). \]
One can now prove the freeness of $\Ac$ using \cite[Thm.\ 1.39 (ii)]{p-Y-12} or \cite[Cor.\ 3.3]{p-Y-05}\footnote{Alternatively, a direct computation with {\sc Magma}, \cite{MAGMA} verified once again with {\sc Singular}, \cite{SIN} tells us that $\Ac$ is free.}:
Choose a hyperplane $H\in\Ac$ and compute the exponents $\{\{d_1,d_2\}\}$ of the multiarrangement $(\Ac^H,\mm^H)$. It turns out that $d_1d_2=165$, thus $\Ac$ is free.

Inspection of the intersection lattice gives the following multisets of invariants:
\begin{eqnarray*}
\{\{|\Ac^H| \mid H\in\Ac \}\} &=& \{\{10^{15},11^{12}\}\}, \\
\{\{|\Ac_p| \mid p\in L(\Ac),\:\:\dim p=1 \}\} &=& \{\{2^{15}, 3^{70}, 7^6\}\}.
\end{eqnarray*}
Notice first that each hyperplane in $\Ac$ contains either $10$ or $11$ intersection points, thus $\Ac$ is not inductively free by Lemma \ref{strnotindfree}.

Now assume that we include a new hyperplane $H$ to $\Ac$, so $\tilde\Ac:=\Ac\dot\cup\{H\}$, and assume that $(\tilde\Ac,\Ac,\tilde\Ac^H)$ is a triple of arrangements with $\exp \tilde\Ac^H\subset\exp \tilde\Ac$, i.e.\ the exponents of $\tilde\Ac$ are either $\{\{1,11,16\}\}$ or $\{\{1,12,15\}\}$ depending on $H$.

If $H$ contains no intersection point $p\in L(\Ac)$, $\dim p=1$, then $|{\tilde\Ac}^H|$ will be equal to $|\Ac|=27$, contradicting $\exp \tilde\Ac^H\subset\exp \tilde\Ac$. If it contains exactly one such point, then $|{\tilde\Ac}^H|\ge 21$ by the above computation of the numbers $|\Ac_p|$, again a contradiction.

There are only 1186 cases left in which $H$ contains at least two intersection points of $L(\Ac)$. None of these $1186$ arrangements is free: This may either be verified by factorizing their characteristic polynomial or by a direct computation. Thus the assumption $\exp \tilde\Ac^H\subset\exp \tilde\Ac$ for the triple $(\tilde\Ac,\Ac,\tilde\Ac^H)$ is false. It turns out that there is now way up or down via the Addition-Deletion-Theorem starting at $\Ac$ and staying in the same dimension, i.e.\ $\Ac$ is not recursively free.
\end{proof}

\begin{remar}
The arrangement $\Ac$ has the `same' intersection lattice as the arrangement
\begin{eqnarray*}
\Bc &=& \{(0,0,1)^\perp,(0,1,1)^\perp,(0,1,2)^\perp,(0,1,3)^\perp,(0,1,4)^\perp,(0,1,5)^\perp, \\
    &&    (0,1,6)^\perp,(1,0,0)^\perp,(1,0,1)^\perp,(1,2,2)^\perp,(1,3,1)^\perp,(1,3,10)^\perp, \\
    &&    (1,4,3)^\perp,(1,4,4)^\perp,(1,5,7)^\perp,(1,6,4)^\perp,(1,6,6)^\perp,(1,8,5)^\perp, \\
    &&    (1,8,8)^\perp,(1,9,0)^\perp,(1,9,1)^\perp,(1,9,4)^\perp,(1,9,5)^\perp,(1,9,8)^\perp, \\
    &&    (1,9,9)^\perp,(1,10,0)^\perp,(1,10,5)^\perp \}.
\end{eqnarray*}
in $\FF_{11}^3$. As for $\Ac$, the arrangement $\Bc$ is a union of two orbits under the action of a subgroup of $\PGL_3(\FF_{11})$ isomorphic to $A_5$.

Notice that contrary to $\Ac$, the arrangement $\Bc$ is recursively free: Let
\begin{eqnarray*}
(H_1,\ldots,H_7) &:=& ((0,1,0)^\perp,(1,1,0)^\perp,(1,2,0)^\perp,(1,3,0)^\perp, \\
 && (1,4,0)^\perp,(1,5,0)^\perp,(1,7,0)^\perp).
\end{eqnarray*}
Then the arrangements $\Bc_i:=\Bc\cup \{H_1,\ldots,H_i\}$, $i=1,\ldots,7$ are free, and $\Bc_7$ is inductively free.
\end{remar}

\begin{remar}
The reflection arrangement $\Ac$ of the complex reflection group $G_{27}$ has $45$ hyperplanes and is also free but not recursively free.
It is free with exponents $\{\{1,19,25\}\}$ for instance by \cite[Thm.\ 6.60]{OT}. The proof that it is not recursively free is similar to the proof of Thm.\ \ref{H3Anotrecfree}: It is not inductively free by inspection of the intersection lattice:
\begin{eqnarray*}
\{\{|\Ac^H| \mid H\in\Ac \}\} &=& \{\{16^{45}\}\}, \\
\{\{|\Ac_p| \mid p\in L(\Ac),\:\:\dim p=1 \}\} &=& \{\{3^{120}, 4^{45}, 5^{36}\}\}.
\end{eqnarray*}
As in Thm.\ \ref{H3Anotrecfree}, one can then consider all possibilities of including a new hyperplane, although the number of possible new hyperplanes is much bigger here. 
\end{remar}

\begin{remar}
Joining four orbits of elements under a certain subgroup of $G_{25}$ (or $G_{26}$), one obtains a free but not recursively free arrangement with $39$ hyperplanes.
\end{remar}


\def\cprime{$'$}
\providecommand{\bysame}{\leavevmode\hbox to3em{\hrulefill}\thinspace}
\providecommand{\MR}{\relax\ifhmode\unskip\space\fi MR }
\providecommand{\MRhref}[2]{%
  \href{http://www.ams.org/mathscinet-getitem?mr=#1}{#2}
}
\providecommand{\href}[2]{#2}

\end{document}